\newcommand{\Hom}{\operatorname{Hom}\nolimits}
\newcommand{\augcone}{\operatorname{augcone}\nolimits}
\newcommand{\pd}{\operatorname{pd}\nolimits}
\newcommand{\Ann}{\operatorname{Ann}\nolimits}
\newcommand{\Tor}{\operatorname{Tor}\nolimits}
\newcommand{\Ext}{\operatorname{Ext}\nolimits}
\newcommand{\n}{\operatorname{\mathfrak{n}}\nolimits}
\newcommand{\Z}{\operatorname{Z}\nolimits}
\newcommand{\V}{\operatorname{V}\nolimits}
\newcommand{\comments}[1]{}
\newtheorem{theorem}{Theorem}[section]
\newtheorem{corollary}[theorem]{Corollary}
\newtheorem{lemma}[theorem]{Lemma}
\newtheorem{proposition}[theorem]{Proposition}
\theoremstyle{definition}
\newtheorem{definition}[theorem]{Definition}
\theoremstyle{definition}
\theoremstyle{definition}
\theoremstyle{definition}
\newtheorem*{example}{Example}
\theoremstyle{definition}
\theoremstyle{definition}
\theoremstyle{remark}
\newtheorem*{remark}{Remark}
\theoremstyle{definition}
\begin{document}

\title{Support varieties over complete intersections made easy}
\author{Petter Andreas Bergh \& David A.\ Jorgensen}
\address{Petter Andreas Bergh \\ Institutt for matematiske fag \\
  NTNU \\ N-7491 Trondheim \\ Norway}
\email{bergh@math.ntnu.no}
\address{David A.\ Jorgensen \\ Department of mathematics \\ University
of Texas at Arlington \\ Arlington \\ TX 76019 \\ USA}
\email{djorgens@uta.edu}


\begin{abstract}
Using an alternate description of support varieties of pairs of modules over a complete intersection, we give several new applications of such varieties, including results for support varieties of intermediate complete intersections.  Simpler proofs of known results are also given.
\end{abstract}

\subjclass[2010]{13D02, 13D07}

\keywords{Complete intersections, support varieties}

\thanks{Part of this work was done while we were visiting the Mittag-Leffler Institute in February and March 2015. We would like to thank the organizers of the Representation Theory program.}

\maketitle

\section{Introduction}\label{sec:intro}
The point of this paper is to exploit an alternate description of support varieties for cohomology of pairs of modules over a complete intersection.  A coordinatized version of this alternate description was first given in \cite{AvramovBuchweitz}, and subsequently generalized to non-complete intersections in \cite{Jorgensen1}.  However, this alternate description has not really been explored; we find that it leads to conceptually clearer proofs of known results, as well as new results for support varieties over related complete intersections, and related support varieties.   

The paper is organized as follows.  In Section 2 we give the alternate definition of support variety, and use it to reprove known results. In Section 3 we prove new results for support varieties of related complete intersections, which we call intermediate complete intersections.  We give a new proof in Section 4 of realizability of support varieties over complete intersections, namely, that every cone in affine space occurs as the support variety of some finitely generated module, even one of finite length. In Section 5 we give further applications of support varieties. Finally, in the appendix we recount the usual definition of support varieties for cohomology over complete intersections, as well as other facts used in the main part of the paper.

\section{Support varieties made easy}\label{sec:easy}

Throughout the paper we fix a regular local ring $(Q, \n, k)$ and an ideal $I$ generated by a regular sequence of length $c$
contained in $\n^2$. By \ref{appendix:algebraically closed} below we can assume that the residue field $k$ is algebraically closed. 
We denote by $R$ the complete intersection ring 
$$R = Q/I,$$
and by $V$ the $k$-vector space 
$$V=I/\n I.$$
For an element $f\in I$, we let $\overline f$ denote its image in $V$.

We now give our working definition of support variety.  A coordinatized version of it was originally introduced in \cite[Theorem 2.5]{AvramovBuchweitz} as an alternate description of support variety (see also \cite[Corollary (3.11)]{Avramov}.  This alternate description was subsequently generalized to non-complete intersections in \cite{Jorgensen1}.  The fact that our definition is coordinate-free leads to conceptually easier proofs of known results on support varieties, as well as to several new applications.

\begin{definition}
Let $M$ and $N$ be finitely generated $R$-modules.  We define the \emph{support variety} of the pair $(M,N)$ to be 
\[ 
\V_R(M,N)=\{ \overline f\in V\mid \Ext_{Q/(f)}^i(M,N)\ne 0 \text{ for infinitely many } i\} \cup  \{ 0 \}.
\]
We set $\V(M)=\V(M,k)$.
\end{definition}

\begin{remark}
There are two things that are admittedly unclear regarding this definition:
\begin{enumerate}
\item That the definition makes sense, in that if $f,g\in I$ are such that 
$\overline f =\overline g$ in $V$, then $\Ext_{Q/(f)}^i(M,N)=0$ for all $i\gg 0$ if and only if
$\Ext_{Q/(g)}^i(M,N)=0$ for all $i \gg 0$.
\item That $\V_R(M,N)$ is actually a closed set in $V$; there is no mention of coordinates in the definition.
\end{enumerate}
In the appendix (see \ref{cohomology operators}) we recall the usual definition of support variety of pairs of modules over a complete intersection.  Choosing a minimal generating set $f_1,\dots,f_c$ of $I$, and thus coordinates of $V$, Avramov and Buchweitz prove in \cite[Theorem 2.5]{AvramovBuchweitz} that these two notions of support variety agree under the identification $V \ni \overline f_i \leftrightarrow e_i\in k^c$.  This answers (2).  That (1) holds is implicit in the course of the proof of \emph{loc.\ cit.} We mention also that, after coordinatizing $V\ni \overline f_i \leftrightarrow e_i\in k^c$, both properties (1) and (2) are proved explicitly to hold more generally for finitely generated modules over arbitrary quotients of integral domains in \cite[Theorem 2.1]{Jorgensen1}.
\end{remark}

We first reiterate some basic properties of support varieties.  Properties (1)-(7) in the following result can be found in 
\cite[Theorem 5.6]{AvramovBuchweitz}, and (8) follows from 
\cite[Existence Theorem 7.4(4)]{AvramovIyengar}. Due to our definition, the proofs we give are significantly simpler than those in the existing literature.  

\begin{theorem} \label{thm:props}
The following hold for finitely generated $R$-modules $M$ and $N$.
\begin{enumerate}
\item $\V_R(k)=V$. 
\item If $\Ext^i_R(M,N)=0$ for all $i \gg 0$, then $\V_R(M,N)=\{0\}$. In particular, if $M$ has finite projective dimension over $R$, then $\V_R(M,N)=\{0\}$ for all $R$-modules $N$.
\item $\V_R(M,N)=\V_R(M)\cap\V_R(N)$.
\item $\V_R(M,M)=\V_R(k,M)=\V_R(M)$.
\item If $M'$ is a syzygy of $M$ and $N'$ is a syzygy of $N$, then $\V_R(M,N)=\V_R(M',N')$. \label{syzygy}
\item If $0\to M_1\to M_2\to M_3\to 0$ and $0\to N_1\to N_2\to N_3\to 0$ are short exact sequence of finitely generated $R$-modules, then for $\{h,i,j\}=\{1,2,3\}$ there are inclusions
\[
\V_R(M_h,N)\subseteq\V_R(M_i,N)\cup\V_R(M_j,N);
\]
\[
\V_R(M,N_h)\subseteq\V_R(M,N_i)\cup\V_R(M,N_j).
\]
\item If $M$ is Cohen-Macaulay of codimension  $m$, then 
\[
\V_R(M)=\V_R(\Ext_R^m(M,R)).
\]
In particular, if $M$ is a maximal Cohen-Macaulay $R$-module, then
$\V_R(M)=\V_R(\Hom_R(M,R))$.
\item \label{regseq} If $x_1,\dots,x_d$ is an $M$-regular sequence, then 
\[
\V_R(M)=\V_R(M/(x_1,\dots,x_d)M).
\]
\end{enumerate}
\end{theorem}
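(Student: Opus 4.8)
The plan is to reduce every item to homological algebra over a single hypersurface. Since $0\in\V_R(M,N)$ by definition, it suffices for each assertion to decide, for a fixed nonzero $v\in V$, whether $v\in\V_R(M,N)$. I would write $v=\overline f$ with $f\in I\setminus\n I$ and extend $f$ to a minimal generating set $f,g_2,\dots,g_c$ of $I$; this is again a regular sequence, since $\Ht I=c$ and $Q$ is Cohen--Macaulay, so $S:=Q/(f)$ is a hypersurface, $R=S/(\overline{g_2},\dots,\overline{g_c})$ with $\overline{g_2},\dots,\overline{g_c}$ an $S$-regular sequence, and $\pd_S R=c-1$; the last point is exactly where $v\ne 0$ enters, since for $\overline f=0$ one would instead have $f\in\n I$ and $\pd_S R=\infty$. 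I would also record the elementary fact that, for any finitely generated $R$-module $L$, one has $\Ext_S^i(L,k)\ne 0$ for infinitely many $i$ if and only if $\pd_S L=\infty$, so that $\overline f\in\V_R(L)$ if and only if $\pd_S L=\infty$.

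With this in hand, items (1)--(4) are quick. For (1): $f\in\n^2$, so $S$ is singular, $\pd_S k=\infty$, and $\overline f\in\V_R(k)$; hence $\V_R(k)=V$. For (2): applying the change-of-rings exact sequence for a quotient by a nonzerodivisor (see the appendix) successively along the chain $R=Q/(f,g_2,\dots,g_c),\ \dots,\ Q/(f)=S$, eventual vanishing of $\Ext_R^i(M,N)$ propagates upward to eventual vanishing of $\Ext_S^i(M,N)$, i.e.\ $\overline f\notin\V_R(M,N)$ (the ``in particular'' being the special case $\pd_R M<\infty$, for which $\Ext_R^i(M,N)=0$ for $i>\pd_R M$). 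For (3): the standard hypersurface criterion that $\Ext_S^i(M,N)=0$ for $i\gg 0$ if and only if $\pd_S M<\infty$ or $\pd_S N<\infty$ (see the appendix) gives immediately $\overline f\in\V_R(M,N)\iff\pd_S M=\pd_S N=\infty\iff\overline f\in\V_R(M)\cap\V_R(N)$. Item (4) is then formal, using (1) and (3): $\V_R(M,M)=\V_R(M)\cap\V_R(M)=\V_R(M)$ and $\V_R(k,M)=\V_R(k)\cap\V_R(M)=V\cap\V_R(M)=\V_R(M)$.

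For (5), (6) and (8) I would again work over $S$. For (5), by symmetry and iteration it suffices to treat a single first syzygy: from $0\to\Omega M\to R^n\to M\to 0$, the long exact $\Ext_S$-sequence together with $\pd_S R^n=c-1<\infty$ gives $\Ext_S^i(\Omega M,N)\cong\Ext_S^{i+1}(M,N)$ for all $i\gg 0$, so $\overline f\in\V_R(\Omega M,N)\iff\overline f\in\V_R(M,N)$, and likewise in the second variable. For (6), the long exact $\Ext_S$-sequence of the given short exact sequence (in either variable) places $\Ext_S^i(M_h,N)$ between two terms built from the other two modules; if $\overline f$ lies in neither of the two corresponding support varieties, those terms vanish for $i\gg 0$ and hence so does $\Ext_S^i(M_h,N)$, which is the asserted inclusion. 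For (8), induction reduces to a single $M$-regular element $x\in\m$; the long exact $\Ext_S$-sequence of $0\to M\xrightarrow{x}M\to M/xM\to 0$ in the second variable $k$, together with the vanishing of multiplication by $x$ on the $k$-vector spaces $\Ext_S^i(M,k)$, gives short exact sequences $0\to\Ext_S^i(M,k)\to\Ext_S^{i+1}(M/xM,k)\to\Ext_S^{i+1}(M,k)\to 0$, hence $\beta_{i+1}^S(M/xM)=\beta_i^S(M)+\beta_{i+1}^S(M)$ and $\pd_S M=\infty\iff\pd_S(M/xM)=\infty$.

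The main obstacle will be (7). Here I would first note that a Cohen--Macaulay $R$-module $M$ of codimension $m$ is, over $S$, Cohen--Macaulay of codimension $t:=m+c-1=\dim S-\dim M$, so $\Ext_S^i(M,S)=0$ for $i\ne t$, and that Rees's change-of-rings theorem identifies the $S$-dual $M^{\vee}:=\Ext_S^{t}(M,S)$ with $\Ext_R^m(M,R)$. It then remains to show $\pd_S M<\infty\iff\pd_S M^{\vee}<\infty$. If $\pd_S M<\infty$, then Auslander--Buchsbaum forces $\pd_S M=t$; dualizing the length-$t$ minimal $S$-free resolution of $M$ yields (since $\Ext_S^i(M,S)=0$ for $i<t$, the dual complex is exact off degree $t$) a finite $S$-free resolution of $M^{\vee}$, so $\pd_S M^{\vee}<\infty$. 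The converse follows by applying this to $M^{\vee}$ together with the isomorphism $M^{\vee\vee}\cong M$, the reflexivity of Cohen--Macaulay modules over the Gorenstein ring $S$. Hence $\V_R(M)=\V_R(M^{\vee})=\V_R(\Ext_R^m(M,R))$, the maximal Cohen--Macaulay case being $m=0$. The only nonelementary inputs I anticipate are, for (7), Rees's theorem, Auslander--Buchsbaum, and Cohen--Macaulay duality over a Gorenstein local ring, and, for (3), the hypersurface vanishing criterion; everything else is the single-hypersurface bookkeeping that the coordinate-free definition makes available.
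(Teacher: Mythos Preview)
Your proposal is correct and tracks the paper's single-hypersurface reduction closely for every item. The one substantive difference is in (3): you cite as a black box the hypersurface criterion that $\Ext_S^i(M,N)=0$ for $i\gg 0$ if and only if $\pd_S M<\infty$ or $\pd_S N<\infty$, pointing to the appendix---but the appendix does not contain this statement. The paper instead proves the nontrivial direction inline: after replacing $M$ and $N$ by maximal Cohen--Macaulay syzygies over $Q/(f)$, it uses the Ext--Tor conversion isomorphisms of \cite{HunekeJorgensen} to obtain two consecutive vanishing $\Tor^{Q/(f)}_i(M^*,N)$, and then invokes Huneke--Wiegand rigidity \cite{HunekeWiegand} to conclude that one of the modules is free. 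Your version is tidier provided you are willing to import the criterion from the literature (it is a consequence of \cite{AvramovBuchweitz}); the paper's version is more self-contained modulo those two citations. Items (1), (2), (4)--(8) match the paper with only cosmetic variations (e.g.\ in (5) you use $\pd_S R<\infty$ directly, while the paper routes through the appendix proposition identifying $R$-syzygies with $Q/(f)$-syzygies in high degree; in (7) you spell out Auslander--Buchsbaum and biduality where the paper cites perfect-module duality from \cite{BrunsHerzog}).
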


\begin{proof}
(1):  This follows since $k$ has infinite projective dimension over every singular quotient ring of $Q$, in particular for every $Q/(f)$, when $0 \neq \overline f \in V$.

(2):  By the change of rings long exact sequence of Ext (see for example \cite[2.1]{Jorgensen1}) it is well-known that if $A$ is a local ring and $B=A/(x)$, where $x$ is a non-zerodivisor of $A$, then vanishing of all higher Ext of a pair of $B$-modules over $A$ holds provided it does so over $B$.  
Inductively, the same is true for the quotient by a regular sequence. Thus, if $\Ext_R^i(M,N)=0$ for all $i\gg 0$, then $\Ext_{Q/(f)}^i(M,N)=0$ for all $i\gg 0$, and for every non-zero $\overline f$ in $V$. 

(3):  Suppose that $\overline f$ is not in $\V(M,N)$.  Then $\Ext^i_{Q/(f)}(M,N)=0$ for all $i\gg 0$.  By taking syzygies of $M$ and $N$ over $Q/(f)$, we may assume that $M$ and $N$ are maximal Cohen-Macaulay $Q/(f)$-modules.  Then we have the conversion isomorphisms
$\Ext^i_{Q/(f)}(\Omega_{Q/(f)}^{-t}(M),N)\cong\Tor_{t-i-1}^{Q/(f)}(M^*,N)$ for $t\ge 3$ and $1\le i\le t-2$, where $\Omega_{Q/(f)}^{-t}(M)$  denotes the $t$th cosyzygy of $M$ 
\cite[Lemma 1.1]{HunekeJorgensen}.  Choosing $t$ large enough, and noting that $\Ext^i_{Q/(f)}(\Omega_{Q/(f)}^{-t}(M),N)=0$ for all $i>0$, we realize two consecutive $\Tor_i^{Q/(f)}(M^*,N)$ vanishing for $i>0$.  Thus by \cite[(1.9)]{HunekeWiegand}, either $M^*$ or $N$ is free, and so either $M$ or $N$ is free.
This means for the original $M$ and $N$ that either $\overline f$ is not in $\V(M)$ or $\overline f$ is not in $\V(N)$, in other words, $\overline f$ is not in $\V(M)\cap\V(N)$, and we have shown
$\V(M)\cap\V(N)\subseteq \V(M,N)$.  The reverse containment is an easy exercise, using (2).

(4):  This statement follows immediately from (3) and (1).

(5):  Suppose that $M'=\Omega_R^d(M)$ and $N'=\Omega_R^e(N)$.  Then we have
\[
\Ext_{Q/(f)}^i(M,N)\cong\Ext_{Q/(f)}^{i-d+e}(\Omega_{Q/(f)}^d(M),\Omega_{Q/(f)}^e(N))=\Ext_{Q/(f)}^{i-d+e}(M',N')
\] 
for all $i\gg 0$.  Thus the first Ext vanishes for all $i\gg 0$ if and only if so does the third.  The result follows.

(6): The long exact sequence of Ext over $Q/(f)$ corresponding to the first short exact sequence is
\[
\cdots \to \Ext^n_{Q/(f)}(M_3,N) \to \Ext^n_{Q/(f)}(M_2,N)\to \Ext^n_{Q/(f)}(M_1,N) \to \cdots
\]
If $\overline f$ is not in $\V(M_i,N)\cup\V(M_j,N)$ for $\{h,i,j\}=\{1,2,3\}$, then $\overline f$ is not in $\V(M_i,N)$ and $\overline f$ is not in $\V(M_j,N)$.  Thus we have
$\Ext^n_{Q/(f)}(M_i,N)=\Ext^n_{Q/(f)}(M_j,N)=0$ for all $n \gg 0$.  Therefore $\Ext^n_{Q/(f)}(M_h,N)=0$ for all $n \gg 0$, and so $\overline f$ is not in $\V(M_h,N)$.  We have shown
$\V(M_h,N)\subseteq\V(M_i,N)\cup\V(M_j,N)$.  The other containment is proved similarly.

(7) If $M$ is Cohen-Macaulay of codimension $m$ over $R$, then it is Cohen-Macaulay of codimension $m+c-1$ over $Q/(f)$.  If moreover $M$ has finite projective dimension over $Q/(f)$, then it is a perfect $Q/(f)$-module of grade $m+c-1$.  It follows that $\Ext^{m+c-1}_{Q/(f)}(M,Q/(f))$ is also a perfect $Q/(f)$-module of grade $m+c-1$ \cite[Exercise 1.4.26]{BrunsHerzog}.  By Rees' Theorem we have 
$\Ext^m_R(M,R)\cong\Ext^{m+c-1}_{Q/(f)}(M,Q/(f))$.  Thus $M$ and $\Ext^m_R(M,R)$ have finite projective dimension over $Q/(f)$ simultaneously. 

(8)  By induction it suffices to consider a single non-zerodivisor $x$.  We have the short exact sequence of $R$-modules $0 \to M \xrightarrow{x} M \to M/xM \to 0$.  The corresponding long exact sequence of Ext over $Q/(f)$ is
\[
\cdots \to \Ext^i_{Q/(f)}(M,k) \xrightarrow{x} \Ext^i_{Q/(f)}(M,k) \to \Ext^{i+1}_{Q/(f)}(M/xM,k) \to\cdots
\]
We have $\overline f$ is not in $\V_R(M)$ if and only if $\Ext^i_{Q/(f)}(M,k)=0$ for all $i\gg 0$. By Nakayama's Lemma this is equivalent to $\Ext^i_{Q/(f)}(M/xM,k)=0$ for all $i\gg 0$, that is,  $\overline f$ is not in $\V_R(M/xM)$. 
\end{proof}

\section{Intermediate complete intersections and support varieties}\label{sec:intermediate}

Our definition of support variety makes it possible to relate varieties over related complete intersections.

\subsection*{Support varieties of intermediate complete intersections}
Any complete intersection $R=Q/I$ of codimension at least 2 has a myriad of related complete intersections. Namely, if $W$ is a subspace of $V$, then choosing preimages in $I$ of a basis of $W$ we obtain another regular sequence \cite[Theorem 2.1.2(c,d)]{BrunsHerzog}, and the ideal $J\subseteq I$ it generates.  We thus get natural projections of complete intersections $Q\to Q/J\to Q/I=R$. We call $Q/J$ a \emph{complete intersection intermediate to $Q$ and $R$}, or when the context is clear, simply an \emph{intermediate complete intersection}. 

Consider the natural map of $k$-vector spaces
\[
\varphi_{J}:J/\n J\to W\subseteq V
\] 
defined by $f+\n J \mapsto f+\n I$.  This is an isomorphism: it is onto by construction,  and one-to-one since $J\cap \n I=\n J$. A basic question is how the support variety of a pair of $R$-modules $(M,N)$ over $R$ relates to that of the pair $(M,N)$ over the intermediate complete intersection $R' = Q/J$.  Using our definition of support variety, together with the map just defined, we get a very simple and appealing answer to this question. 

\begin{theorem}\label{thm:main}
Let $W$ be a subspace of $V$.  Consider the intermediate complete intersection $R'=Q/J$, where
$J$ is an ideal generated by a regular sequence consisting of preimages in $I$ of a basis of $W$.
Then for all finitely generated $R$-modules $M$ and $N$ we have
\[
\varphi_{J}(\V_{R'}(M,N)) = \V_R(M,N) \cap W
\]
\end{theorem}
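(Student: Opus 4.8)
The plan is to exploit the fact that for an element $g$ of the subideal $J$, the hypersurface ring $Q/(g)$ appearing in the definition of support variety is literally the same object whether $g$ is viewed inside $I$ or inside $J$; consequently both sides of the claimed identity are controlled by exactly the same $\Ext$-vanishing conditions, and the equality becomes pure bookkeeping with $\varphi_J$.

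Concretely, fix a basis $w_1,\dots,w_s$ of $W$ and preimages $g_1,\dots,g_s \in I$ as in the statement, so $J = (g_1,\dots,g_s)$ and $\varphi_J\colon J/\n J \to W$, $g + \n J \mapsto g + \n I$, is an isomorphism. I would first record that for $g \in J$ one has $g \in \n J$ if and only if $g \in \n I$ (this is the equality $J \cap \n I = \n J$ already used for injectivity of $\varphi_J$), so that a nonzero class $w \in W$ has a representative $g \in J$ with $g \notin \n J$, i.e.\ with $\varphi_J^{-1}(w) \neq 0$. Then, using the well-definedness of support varieties (the Remark following the definition, part (1), or \cite[Theorem 2.1]{Jorgensen1}) once over $R = Q/I$ and once over the intermediate complete intersection $R' = Q/J$, I would conclude that
\[
\varphi_J^{-1}(w) \in \V_{R'}(M,N) \iff \Ext_{Q/(g)}^i(M,N) \neq 0 \text{ for infinitely many } i \iff w \in \V_R(M,N) .
\]
The middle condition mentions only the single ring $Q/(g)$ and the pair $(M,N)$ --- nothing about whether $g$ lies in $I$ or in $J$ --- while the two outer equivalences are just the definitions of $\V_{R'}$ and $\V_R$ evaluated at the representative $g$, which is legitimate by well-definedness. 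Including the origin ($\varphi_J(0) = 0$, which lies in every support variety) this shows $x \in \V_{R'}(M,N) \iff \varphi_J(x) \in \V_R(M,N)$ for all $x \in J/\n J$; since $\varphi_J$ is a bijection onto $W$, applying it to both sides yields $\varphi_J(\V_{R'}(M,N)) = \{\, w \in W : w \in \V_R(M,N)\,\} = \V_R(M,N) \cap W$.

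I do not anticipate a real obstacle: this is exactly the kind of statement the coordinate-free definition is designed to trivialize. The one point needing care is that the definition of $\V_R(M,N)$ may be evaluated on the specific representative $g \in J \subseteq I$ rather than on some arbitrary preimage of $w$ --- precisely the well-definedness recorded after the definition, which I would simply cite. I would also note at the outset that $M$ and $N$ are finitely generated over $R'$ (since $R$ is a finitely generated $R'$-module), and that $R' = Q/J$ satisfies the standing hypotheses ($J$ generated by a regular sequence, and $J \subseteq I \subseteq \n^2$), so that the definition of $\V_{R'}$ applies verbatim.
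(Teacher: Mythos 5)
Your proposal is correct and takes essentially the same approach as the paper's proof: the paper likewise observes that a representative $f \in J$ of a class in $W$ yields the identical hypersurface $Q/(f)$ on both sides, so membership in $\varphi_J(\V_{R'}(M,N))$ and in $\V_R(M,N)\cap W$ reduce to the same $\Ext$-vanishing condition, and calls the argument tautological. Your write-up merely spells out the well-definedness and the $\varphi_J$ bookkeeping that the paper leaves implicit.
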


\begin{proof} 
Let $f+\n I\in\varphi_J(\V_{R'}(M,N))$, where $f\in J$.  Then $f+\n J$
is in $\V_{R'}(M,N)$, which means that $\Ext^i_{Q/(f)}(M,N)$ is nonzero for infinitely many $i$.
But then $f+\n I$ is in $\V_R(M,N)\cap W$.  The reverse inclusion is equally tautological.
\end{proof}

\subsection*{Equivalent intermediate complete intersections}

Let $Q/J$ and $Q/J'$ be two complete intersections intermediate to $Q$ and $R$.  The condition that  
\[
\varphi_J(J/\n J)=\varphi_{J'}(J'/\n J')
\] 
in $V$ defines an equivalence relation on the set of such intermediate complete intersections.  The next result states that equivalent intermediate complete intersections have the same support varieties.

\begin{proposition}\label{equivalentcompleteintersections}
Suppose that $R'=Q/J$ and $R''=Q/J'$ are equivalent complete intersections intermediate to $Q$ and $R$, that is, $\varphi_J(J/\n J)=\varphi_{J'}(J'/\n J')$.  Then for all finitely generated $R$-modules $M$ and $N$ we have
\[
\varphi_J(V_{R'}(M,N))=\varphi_{J'}(V_{R''}(M,N))
\]
\end{proposition}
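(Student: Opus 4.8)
The plan is to obtain this as an essentially immediate consequence of Theorem \ref{thm:main}. The key observation is that both $R'=Q/J$ and $R''=Q/J'$ are intermediate complete intersections attached to \emph{the same} subspace of $V$, namely
\[
W:=\varphi_J(J/\n J)=\varphi_{J'}(J'/\n J').
\]
Indeed, by the construction of intermediate complete intersections recalled above, $J$ is generated by a regular sequence consisting of preimages in $I$ of a basis of $\varphi_J(J/\n J)=W$, and likewise $J'$ is generated by a regular sequence of preimages in $I$ of a basis of $\varphi_{J'}(J'/\n J')=W$. Hence both $R'$ and $R''$ satisfy the hypotheses of Theorem \ref{thm:main} with respect to this common $W$, and the equivalence relation is precisely what arranges this.

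From here I would simply apply Theorem \ref{thm:main} twice. Applied to $R'=Q/J$ it gives
\[
\varphi_J(\V_{R'}(M,N))=\V_R(M,N)\cap W,
\]
and applied to $R''=Q/J'$ it gives
\[
\varphi_{J'}(\V_{R''}(M,N))=\V_R(M,N)\cap W.
\]
Comparing the right-hand sides yields $\varphi_J(\V_{R'}(M,N))=\varphi_{J'}(\V_{R''}(M,N))$, which is the assertion.

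There is no real obstacle in this argument; the only thing that needs a moment's attention is the bookkeeping in the first paragraph. Alternatively, one can argue directly without invoking Theorem \ref{thm:main}: if $f\in J$ satisfies $f+\n I\in\varphi_J(\V_{R'}(M,N))$, then $f+\n I\in W=\varphi_{J'}(J'/\n J')$, so there is $g\in J'$ with $g+\n I=f+\n I$ in $V$; by the well-definedness of support varieties (the point (1) discussed in the remark following the Definition), $\Ext^i_{Q/(f)}(M,N)$ is nonzero for infinitely many $i$ if and only if $\Ext^i_{Q/(g)}(M,N)$ is, so $g+\n J'\in\V_{R''}(M,N)$ and therefore $f+\n I=\varphi_{J'}(g+\n J')\in\varphi_{J'}(\V_{R''}(M,N))$; the reverse inclusion is symmetric.
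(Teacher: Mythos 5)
Your proof is correct and matches the paper's argument exactly: both apply Theorem \ref{thm:main} twice with the common subspace $W=\varphi_J(J/\n J)=\varphi_{J'}(J'/\n J')$ and compare the two resulting equalities. The alternative direct argument you sketch at the end is also fine, but the Theorem \ref{thm:main} route is precisely what the paper does.
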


\begin{proof} This just follows from Theorem \ref{thm:main}: 
\begin{align*}
\varphi_J(V_{R'}(M,N))&=\V_R(M,N)\cap \varphi_J(J/\n J)\\
&=\V_R(M,N)\cap \varphi_{J'}(J'/\n J')\\
&=\varphi_{J'}(V_{R''}(M,N)).
\end{align*}
\end{proof}

\section{realizability}

Recall that  $V=I/\n I$.  For the following results, we associate to $V$ the coordinate ring $k[x_1,\dots,x_c]$, coordinatized by $x_i \leftrightarrow \overline f_i\in V$, where $f_i\in Q$, $1\le i\le c$, form a minimal generating set for the ideal $I$. For a polynomial $p\in k[x_1,\dots,x_c]$, we let $\Z(p)$ denote the zero set of $p$ in $V$.

\begin{lemma}\label{lemma:realize}  Let $p$ be a homogeneous polynomial in $k[x_1,\dots,x_c]$, the coordinate ring of $V$, and $M$ a finitely generated $R$-module.  Then there exists a finitely generated $R$-module $K_p$ such that 
\[
\V_R(K_p)=\V_R(M)\cap \Z(p)
\] 
\end{lemma}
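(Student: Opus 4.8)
The plan is to reduce the general case to the case where $p$ is a linear form by a degree-lowering trick, and then to realize the zero set of a linear form as a support variety using an intermediate complete intersection together with Theorem~\ref{thm:main}. First I would handle the case $\deg p = 1$: after a linear change of coordinates on $V$ (which is harmless, since support varieties are defined coordinate-free and a linear change of coordinates corresponds to choosing a different minimal generating set of $I$), we may assume $p = x_c$, so $\Z(p) = W$ where $W$ is the hyperplane spanned by $\overline f_1,\dots,\overline f_{c-1}$. Choose the intermediate complete intersection $R' = Q/(f_1,\dots,f_{c-1})$. Then $\varphi_J \colon J/\n J \to W$ is an isomorphism, and by Theorem~\ref{thm:main} we have $\varphi_J(\V_{R'}(M)) = \V_R(M) \cap W = \V_R(M)\cap\Z(p)$. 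The point is that $\V_{R'}(M)$ is a support variety over the complete intersection $R'$, and $M$ viewed as an $R$-module is finitely generated over $R'$; but this is an intersection inside $W \cong J/\n J$, not literally inside $V$. To land a module whose variety, computed over $R$ itself, equals $\V_R(M)\cap\Z(p)$ inside $V$, I would instead take $N$ to be a high syzygy of $M$ over $R'$ regarded as an $R$-module: by the appendix's comparison of cohomology operators, $\V_R(N)$ should pick out exactly the directions in $V$ where $N$ has infinite projective dimension, and these are the directions $\overline f$ with $\overline f \in W$ (forced, since $N$ has finite projective dimension over $R/(f)$ whenever $\overline f \notin W$ because $N$ is eventually free over $R'$) that also lie in $\V_R(M)$.

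For the general homogeneous case, I would induct on $\deg p$. Write $p = x_i q + r$ appropriately, or more cleanly: if $p$ has degree $d \geq 2$, one can factor out the realization over a suitable quadratic hypersurface. The cleaner route is: since $k$ is algebraically closed, $\Z(p)$ is a union of hyperplanes when $p$ is a product of linear forms, but a general homogeneous $p$ need not factor into linear forms. So instead I would use the following standard device for cutting support varieties by one hypersurface at a time: given $M$ and a homogeneous $p$, pass to $M \oplus \Omega_R M \oplus \cdots$ or use a module whose complexity-type data lets one apply the operator associated to $p$. Concretely, the appendix's machinery gives, for each $p$, an element of the Ext-algebra (after coordinatizing, $\Ext^*_R(k,k)$ has a polynomial subalgebra $k[\chi_1,\dots,\chi_c]$) and one realizes $\V_R(M)\cap\Z(p)$ as the variety of the cone over the kernel of multiplication by the cohomology class $\chi_p$ corresponding to $p$ acting on $\Ext^*_R(M,k)$ — i.e.\ as $\V_R(M'')$ for the module $M''$ fitting in a triangle built from $p$. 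This is exactly the Carlson-module / "$L_\zeta$" construction adapted to complete intersections.

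The key steps, in order, are: (1) reduce to $p$ linear by a coordinate change and the $\deg p = 1$ construction via an intermediate complete intersection and a high syzygy; (2) for higher degree, invoke the cohomology-operator description from the appendix to obtain a cohomology class $\chi_p \in \Ext^{2d}_R(k,k)$ (degree scaled appropriately) with zero locus $\Z(p)$, and form the module $K_p = \Omega^{-1}_R(\text{cone of } \chi_p \otimes M)$, or more safely an honest module obtained as a syzygy of such a cone so that it is finitely generated; (3) verify $\V_R(K_p) = \V_R(M) \cap \Z(p)$ using property~\eqref{syzygy} of Theorem~\ref{thm:props} and the standard computation that the variety of a Carlson module is the zero locus of its defining class intersected with the ambient variety. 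The main obstacle I expect is step~(3) in the higher-degree case: making the cohomology-operator / Carlson-module argument rigorous requires careful bookkeeping of the bigrading on $\Ext^*_R(M,k)$ as a module over $\mathcal{S} = R[\chi_1,\dots,\chi_c]$ and checking that the class $\chi_p$ one writes down really has $\Z(p)$ as its rank-variety locus — this is where one must lean most heavily on the appendix and on \cite{AvramovBuchweitz}. A secondary subtlety is ensuring $K_p$ is genuinely a finitely generated module (not merely an object of the derived category), which is why taking a syzygy of the mapping cone is the right move, together with length-finiteness if one wants it: if $M$ has finite length then so does $K_p$, since the construction stays within finite-length modules.
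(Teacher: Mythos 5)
Your step~(2)--(3) outline --- building $K_p$ as the middle term of a short exact sequence coming from a mapping cone on the Ext class represented by $p(\chi_1,\dots,\chi_c)$, then computing its variety --- is precisely the paper's construction, so you identify the right idea. Two issues, though. The linear-case detour in step~(1) does not parse: $\Omega^n_{R'}(M)$ is an $R'$-module, not an $R$-module (nothing makes the extra generator of $I$ act as zero on it), so ``regarded as an $R$-module'' has no meaning; and in any case the detour is unnecessary, since the mapping-cone construction works uniformly for every degree, with no coordinate change needed. More importantly, step~(3) cites ``the standard computation that the variety of a Carlson module is the zero locus of its defining class intersected with the ambient variety'' as a black box, but that computation is exactly the statement to be proved, and you flag it yourself as the main obstacle.

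The paper closes that gap directly using the coordinate-free definition of $\V_R$. Writing $f=a_1f_1+\cdots+a_cf_c$, the compatibility $\varphi\circ\xi_k(\chi_j)=a_j\,\xi'_k(\chi)\circ\varphi$ of cohomology operators over $R$ and over $Q/(f)$ (from Eisenbud and Avramov--Buchweitz) shows that multiplication by $\xi_k(p(\chi_1,\dots,\chi_c))$ on $\Ext_R(M,k)$, pushed across the comparison map to $\Ext_{Q/(f)}(M,k)$, becomes the scalar $p(a_1,\dots,a_c)$ times a power of $\xi'_k(\chi)$. Over the hypersurface $Q/(f)$ the operator $\xi'_k(\chi)$ is eventually an isomorphism whenever $\Ext^i_{Q/(f)}(M,k)$ does not eventually vanish, so comparing the long exact sequences over $R$ and over $Q/(f)$ associated with $0\to M\to K_p\to\Omega_R^{d-1}M\to 0$ shows that $\Ext^i_{Q/(f)}(K_p,k)$ eventually vanishes iff $\Ext^i_{Q/(f)}(M,k)$ eventually vanishes or $p(a_1,\dots,a_c)\neq 0$; i.e.\ iff $\overline f\notin\V_R(M)\cap\Z(p)$. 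That verification is the actual content of the lemma, and it is what your outline leaves unaddressed.
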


\begin{proof} Let $f\in I$ be an element such that $\overline f$ is nonzero in $V$.  Then we have the natural projection $Q/(f)\to R$, which induces natural maps $\varphi:\Ext_R(M,k) \to \Ext_{Q/(f)}(M,k)$.
Consider the map $\xi_k:k[\chi_1,\dots,\chi_c] \to \Ext_R(k,k)$ from the polynomial ring of cohomology operators associated to $f_1,\dots,f_c$ (see \ref{cohomology operators}), and the map $\xi'_M:k[\chi] \to \Ext_{Q/(f)}(k,k)$, from the polynomial ring of the cohomology operator associated to $f$. 
Write $f=a_1f_1+ \cdots + a_cf_c$.
Then by \cite[1.1.1]{AvramovBuchweitz} (which follows from \cite[Proposition 1.7]{Eisenbud}) we have commutativity of the squares
\[
\xymatrixrowsep{2pc}
\xymatrixcolsep{3pc}
\xymatrix
{
\Ext_R^i(M,k) \ar[r]^{\xi_k(\chi_j)\cdot}\ar[d]^{\varphi_i} & \Ext_R^{i+2}(M,k) \ar[d]^{\varphi_{i+2}} \\
 \Ext_{Q/(f)}^i(M,k) \ar[r]^{a_j\xi'_k(\chi)\cdot} & \Ext_{Q/(f)}^{i+2}(M,k)
}
\]
for all $i\ge 0$ and all $1\le j \le c$.  In other words, the equality
\[
\varphi\circ\xi_k(\chi_j)= a_j\xi'_k(\chi)\circ\varphi
\] 
holds in each degree and for all $1\le j \le c$. A straightforward computation yields
\[
\varphi\circ\xi_k(p(\chi_1,\dots,\chi_c))=p(a_1,\dots,a_c)\xi'_k(\chi)^d\circ\varphi
\]
in each degree, where $d$ is the degree of $p$.  We note that since $Q/(f)$ is a hypersurface, if $\Ext^i_{Q/(f)}(M,k)\ne 0$ for all $i\gg 0$, then $\xi_k'(\chi)$ is an isomorphism in all sufficiently high degrees (see \ref{appendix:hypersurface} in the appendix).  Consequently, so is $\xi_k'(\chi)^d$ under the same condition. 

Consider the short exact sequence of $R$-modules associated to $p(\chi_1,\dots,\chi_c)$ (see \ref{appendix:ses} in the appendix):
\[
0 \to M \to K_p \to \Omega_R^{d-1}M \to 0
\]
This gives rise to the commutative diagram for $i \gg 0$
\[
\xymatrixrowsep{2pc}
\xymatrixcolsep{1.9pc}
\xymatrix{
\Ext_R^{i+d-1}(M,k) \ar[r]\ar[d]^{\varphi_{i+d-1}} & \Ext_R^i(K_p,k) \ar[r]\ar[d] & \Ext_R^i(M,k) \ar[r]^{\alpha_i}\ar[d]^{\varphi_i} & \Ext_R^{i+d}(M,k) \ar[d]^{\varphi_{i+d}}  \\
\Ext_{Q/(f)}^{i+d-1}(M,k) \ar[r] & \Ext_{Q/(f)}^i(K_p,k) \ar[r] & \Ext_{Q/(f)}^i(M,k) \ar[r]^{\beta_i} & \Ext_{Q/(f)}^{i+d}(M,k)
}
\]
where $\alpha_i=\xi_k(p(\chi_1,\dots,\chi_c))\cdot$, $\beta_i=p(a_1,\dots,a_c)\xi'_k(\chi)^d\cdot$, and we have used the fact that 
$\Ext^i_{Q/(f)}(\Omega^{d-1}_R(M),k)\cong \Ext^i_{Q/(f)}(\Omega_{Q/(f)}^{d-1}(M),k)$ for all $i\gg 0$
(see \ref{appendix:syzygies} in the appendix).

Now we are ready to prove the lemma.  Suppose that $\overline f\in V$ is not in $\V_R(K_p)$.  Then $\Ext_{Q/(f)}(K_p,k)=0$ for all $i\gg 0$.  In this case, either $\Ext_{Q/(f)}(M,k)=0$ for all $i \gg 0$, in which case $\overline f$ is not in $\V_R(M)$, or the maps $\beta_i$ in the diagram above are isomorphisms for all $i\gg 0$.  The latter happens only if $p(a_1,\dots,a_c)\ne 0$, in other words,  $\overline f$ is not in $\Z(p)$. Thus $\overline f$ is not in $\V_R(M)\cap\Z(p)$ and we have 
$\V_R(M)\cap \Z(p)\subseteq\V_R(K_p)$.

Now suppose that $\overline f$ is not in $\V_R(M)\cap \Z(p)$.  If $\overline f$ is not in 
$\V_R(M)$, then $\Ext^i_{Q/(f)}(M,k)=0$ for all $i\gg 0$,  and this forces $\Ext^i_{Q/(f)}(K_p,k)=0$ for all $i\gg 0$, so that $\overline f$ is not in $\V_R(K_p)$.  So assume $\overline f$ is in $\V_R(M)$ and not in $\Z(p)$.  Then $p(a_1,\dots,a_c)\ne 0$ and the maps $\beta_i$ are isomorphisms for all $i\gg 0$.  Thus $\Ext^i_{Q/(f)}(K_p,k)=0$ for all $i\gg 0$ and so $\overline f$ is not in $\V_R(K_p)$.  This finishes the proof.
\end{proof}

\begin{theorem}\label{thm:realize} Let $\mathfrak C$ be any cone in $V$.  Then there exists a finitely generated $R$-module $M$ such that
\[
\V_R(M)=\mathfrak C
\]
\end{theorem}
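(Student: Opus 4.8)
The plan is to reduce the general case to the case covered by Lemma~\ref{lemma:realize} by intersecting hypersurfaces. Recall that a \emph{cone} $\mathfrak C$ in the affine space $V \cong k^c$ is a Zariski-closed subset stable under scalar multiplication; equivalently, its defining ideal in $k[x_1,\dots,x_c]$ can be generated by homogeneous polynomials. So first I would choose homogeneous polynomials $p_1,\dots,p_r \in k[x_1,\dots,x_c]$ with $\mathfrak C = \Z(p_1) \cap \cdots \cap \Z(p_r)$. This is where the hypothesis that $\mathfrak C$ is a cone (rather than an arbitrary closed set) enters: it is exactly what lets us take the $p_i$ homogeneous, which is required to invoke Lemma~\ref{lemma:realize}.

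Next I would build $M$ by iterating the construction of the lemma. Start with $M_0 = k$; by Theorem~\ref{thm:props}(1) we have $\V_R(M_0) = V = \V_R(k)$. Then apply Lemma~\ref{lemma:realize} to $M_0$ and the homogeneous polynomial $p_1$ to obtain a finitely generated $R$-module $M_1 := K_{p_1}$ with
\[
\V_R(M_1) = \V_R(M_0) \cap \Z(p_1) = V \cap \Z(p_1) = \Z(p_1).
\]
Inductively, given $M_{j}$ with $\V_R(M_{j}) = \Z(p_1) \cap \cdots \cap \Z(p_{j})$, apply Lemma~\ref{lemma:realize} to the pair $(p_{j+1}, M_{j})$ to get $M_{j+1} := K_{p_{j+1}}$ with $\V_R(M_{j+1}) = \V_R(M_{j}) \cap \Z(p_{j+1}) = \Z(p_1) \cap \cdots \cap \Z(p_{j+1})$. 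After $r$ steps, $M := M_r$ is a finitely generated $R$-module with $\V_R(M) = \Z(p_1) \cap \cdots \cap \Z(p_r) = \mathfrak C$, as desired.

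I do not expect a serious obstacle here: the hard analytic work is entirely inside Lemma~\ref{lemma:realize} (the commutative-diagram bookkeeping with cohomology operators and the short exact sequence attached to a homogeneous polynomial), and this theorem is essentially a formal consequence of it together with the fact that cones have homogeneous defining ideals. The only point deserving a word is the base case and the well-definedness of the iteration — namely that Lemma~\ref{lemma:realize} applies to \emph{any} finitely generated module in the second slot, so in particular to each $M_j$ produced along the way — and that $k$ is a legitimate starting module with full support by Theorem~\ref{thm:props}(1). One might also remark, as the introduction promises, that the module $M$ can be taken to be of finite length; this would follow by noting that $k = M_0$ has finite length and tracking that the construction $K_p$ in the appendix preserves finite length (it sits in a short exact sequence with $M$ and a syzygy of $M$), but the bare statement as given needs only the argument above.
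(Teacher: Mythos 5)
Your proof is correct and takes essentially the same route as the paper: write $\mathfrak C=\Z(p_1,\dots,p_r)$ with homogeneous $p_i$, start from $M_0=k$ using Theorem~\ref{thm:props}(1), and intersect one hypersurface at a time by iterating Lemma~\ref{lemma:realize}. One minor caveat on your closing aside (not needed for the statement): $K_p$ does not preserve finite length, since it surjects onto $\Omega_R^{d-1}M$, which has positive depth; the paper's finite-length remark instead replaces $M$ by a maximal Cohen--Macaulay syzygy and factors out a maximal regular sequence, via parts (\ref{syzygy}) and (\ref{regseq}) of Theorem~\ref{thm:props}.
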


\begin{proof} Since $\mathfrak C$ is a cone in $V$, there exist homogeneous polynomials $p_1,\dots,p_n$ in $k[x_1,\dots,x_c]$ such that $\mathfrak C =\Z(p_1,\dots,p_n)$.  
We induct on $n$.  

For the base case $n=1$, we apply Lemma \ref{lemma:realize} to the $R$-module $M=k$.  Thus there exists a finitely generated $R$-module $K_1$ such that
\[
\V_R(K_1)=\V_R(k)\cap\Z(p_1)=V\cap\Z(p_1)=\Z(p_1)
\]

Now suppose that we have constructed a finitely generated $R$-module $K_{n-1}$ such that
$\V_R(K_{n-1})=\Z(p_1,\dots,p_{n-1})$.  Applying Lemma \ref{lemma:realize} to $K_{n-1}$ now, we 
obtain a finitely generated $R$-module $K_n$ such that
\[
\V_R(K_n)=\V_R(K_{n-1})\cap\Z(p_n)=\Z(p_1,\dots,p_{n-1})\cap\Z(p_n)=\Z(p_1,\dots,p_n)
\]
\end{proof}

\begin{remark}
In fact every cone in $V$ is realized by an $R$-module of finite length. Indeed, suppose that 
$\mathfrak C$ is a cone in $V$.  Then from Theorem \ref{thm:realize} there exists an $R$-module $M$ such that $\V_R(M)=\mathfrak C$.  From (\ref{syzygy}) of Theorem \ref{thm:props} we can assume that $M$ is maximal Cohen-Macaulay.  Letting $\bm x$ denote a maximal regular sequence on $R$, we have that it is also a regular sequence on $M$.  Thus by (\ref{regseq}) of Theorem \ref{thm:props} we have that $\mathfrak C=\V_R(M)=\V_R(M/(\bm x)M)$, and $M/(\bm x)M$ has finite length.
\end{remark}

\section{Applications}

The following theorem shows that support varieties for complementary complete intersections can be realized by a \emph{single} module.
 
\begin{theorem}\label{thm:tensor}
Suppose that $V=W_1\oplus\cdots\oplus W_r$, for subspaces $W_i$, $1\le i\le r$, of $V$.  Choose complete intersections $R_i=Q/J_i$ intermediate to $Q$ and $R$ such that $\varphi_{Ji}(J_i/\n J_i)=W_i$, $i\le i\le r$.  Let $M_i$ be a maximal Cohen-Macaulay $R_i$-module for $1\le i\le r$. Then for all $1\le i\le r$ we have
\[
\varphi_{Ji}(\V_{R_i}(M_i))=\V_R(M_1\otimes_Q \cdots \otimes_QM_r)\cap W_i
\]
\end{theorem}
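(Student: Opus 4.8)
I would begin by reducing, via Theorem~\ref{thm:main}, to a statement living entirely over $R_i$. Applying that theorem to the finitely generated $R$-module $M:=M_1\otimes_Q\cdots\otimes_Q M_r$ (viewed over $R_i$ through the surjection $R_i\to R$) with second argument $k$ yields $\varphi_{J_i}(\V_{R_i}(M))=\V_R(M)\cap W_i$; since $\varphi_{J_i}$ is injective, it therefore suffices to prove $\V_{R_i}(M)=\V_{R_i}(M_i)$ inside $J_i/\n J_i$. Unravelled, this says that for every $g\in J_i$ whose class in $J_i/\n J_i$ is nonzero, $\Ext^j_{Q/(g)}(M,k)\ne 0$ for infinitely many $j$ if and only if $\Ext^j_{Q/(g)}(M_i,k)\ne 0$ for infinitely many $j$. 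Fix such a $g$, put $S=Q/(g)$, and write $\tilde N$ for the $Q$-module $\bigotimes_{j\ne i}M_j$, so that $M=M_i\otimes_Q\tilde N$ and, as $g\in J_i$ annihilates $M_i$, $M_i$ is an $S$-module.

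The engine of the proof is a Künneth isomorphism
\[
\Ext^n_S(M_i\otimes_Q\tilde N,k)\ \cong\ \bigoplus_{p+q=n}\Ext^p_S(M_i,k)\otimes_k\Tor^Q_q(\tilde N,k).
\]
To produce it I would choose an $S$-free resolution $G_\bullet$ of $M_i$ and a minimal $Q$-free resolution $P_\bullet$ of $\tilde N$; each $G_p\otimes_Q P_q$ is $S$-free, and — provided $\Tor^Q_{>0}(M_i,\tilde N)=0$ — the total complex of $G_\bullet\otimes_Q P_\bullet$ has homology $M_i\otimes_Q\tilde N$ in degree $0$ and zero elsewhere, hence is an $S$-free resolution of $M_i\otimes_Q\tilde N$. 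Applying $\Hom_S(-,k)$ and using that $P_\bullet$ is minimal, so that $k\otimes_Q P_\bullet$ has vanishing differential and computes $\bigoplus_q\Tor^Q_q(\tilde N,k)$, identifies the dualized double complex over the field $k$ with $\Hom_S(G_\bullet,k)\otimes_k(k\otimes_Q P_\bullet)$, and the Künneth theorem over $k$ gives the displayed formula.

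Granting the isomorphism, the conclusion is quick. Each $M_j$, being maximal Cohen--Macaulay over the complete intersection $R_j=Q/J_j$, has finite projective dimension $\dim_k W_j$ over $Q$ by Auslander--Buchsbaum; combined with the Tor-independence of $M_1,\dots,M_r$ over $Q$ treated below, $\tilde N$ then has finite projective dimension $d:=\sum_{j\ne i}\dim_k W_j$ over $Q$, with $\Tor^Q_d(\tilde N,k)\ne 0$ and $\Tor^Q_q(\tilde N,k)=0$ for $q>d$. Hence, if $\pd_S M_i<\infty$, the right side of the Künneth formula vanishes for $n\gg 0$ and $g\notin\V_{R_i}(M)$; and if $\pd_S M_i=\infty$, then, $S$ being a hypersurface, $\Ext^p_S(M_i,k)\ne 0$ for all $p\gg 0$ (see~\ref{appendix:hypersurface}), so the summand with $q=d$ keeps $\Ext^n_S(M_i\otimes_Q\tilde N,k)$ nonzero for all $n\gg 0$ and $g\in\V_{R_i}(M)$. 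As $g$ ranged over all admissible elements, $\V_{R_i}(M)=\V_{R_i}(M_i)$, which is what we wanted.

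The one substantial input — and the step I expect to be the main obstacle — is the vanishing $\Tor^Q_{>0}(M_i,\tilde N)=0$, equivalently that $M_1,\dots,M_r$ are Tor-independent over $Q$; I would prove this by induction on $r$. Choose the generators of the $J_j$ with $j\ne i$ so that, together with those of $J_i$, they form a regular sequence generating $I$; then those generators span an ideal $J'$, form a $Q$-regular sequence, and are moreover $M_i$-regular — they are $R_i$-regular, and $M_i$ is maximal Cohen--Macaulay over the Cohen--Macaulay ring $R_i$. Thus $\Tor^Q_{>0}(M_i,Q/J')=0$ (compute with the Koszul complex on these generators), so a change-of-rings isomorphism identifies $\Tor^Q_\bullet(M_i,\tilde N)$ with $\Tor^{Q/J'}_\bullet(\tilde N,M_i/J'M_i)$ since $\tilde N$ is a $Q/J'$-module, and moreover tensoring a finite $Q$-free resolution of $M_i$ with $Q/J'$ exhibits $M_i/J'M_i$ as a $Q/J'$-module of finite projective dimension. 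On the other hand, by the inductive hypothesis the modules $M_j$ with $j\ne i$ are Tor-independent over $Q$, so $\pd_Q\tilde N=\dim Q-\dim(Q/J')$ and $\tilde N$ is maximal Cohen--Macaulay over the complete intersection $Q/J'$. Everything therefore reduces to the elementary lemma that, over a Cohen--Macaulay local ring, a maximal Cohen--Macaulay module has vanishing higher Tor against every module of finite projective dimension; I would prove this by a double induction on projective dimension and Krull dimension, comparing the relevant $\Tor_1$ with its localizations at non-maximal primes and using the Auslander depth formula to see that $\Tor_1$ has positive depth, so that it has no associated primes and hence is zero. As a byproduct, the same induction shows that $M_1\otimes_Q\cdots\otimes_Q M_r$ is itself maximal Cohen--Macaulay over $R$.
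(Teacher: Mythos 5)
Your argument is correct, and it reaches the same pointwise statement as the paper — for a hypersurface $S = Q/(g)$ with $g\in J_i$, compare $\pd_S M_i$ with $\pd_S(M_1\otimes_Q\cdots\otimes_Q M_r)$ — but it gets there by a somewhat different route. The paper first reduces to $r=2$ by induction, then produces a $Q/(f)$-free resolution of $M_1\otimes_Q M_2$ by tensoring a $Q/(f)$-free resolution of $M_1$ with the (finite) $Q/(f)$-free resolution $Q/(f)\otimes_Q F$ of $M_2/fM_2$, and reads off finiteness of projective dimension directly from the lengths; you instead handle all $r$ at once and phrase the same tensor-of-resolutions argument dually, as a K\"unneth decomposition $\Ext^n_S(M_i\otimes_Q\tilde N,k)\cong\bigoplus_{p+q=n}\Ext^p_S(M_i,k)\otimes_k\Tor^Q_q(\tilde N,k)$, then use $\Tor^Q_d(\tilde N,k)\ne 0$ together with the eventual non-vanishing of $\Ext^p_S(M_i,k)$ (from the hypersurface fact in the appendix) to read off the equivalence. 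These are morally the same computation, though your version has the small advantage of making the two directions of the equivalence symmetric and visible at a glance.

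The real divergence is in how the crucial Tor-vanishing is obtained. The paper's proof gets $\Tor^Q_{>0}(M_1,M_2)=0$ by citing a depth-formula result of Jorgensen (the reference \cite{Jorgensen2}, invoked as ``[2.2]''), which immediately gives Tor-independence in the $r=2$ case. You instead prove the needed vanishing from scratch: you change rings down to $Q/J'$, observe that $M_i/J'M_i$ has finite projective dimension and $\tilde N$ is MCM there, and then establish the general lemma that over a CM local ring a finite-projective-dimension module and an MCM module are Tor-independent — by localizing to force $\Tor_1$ to have finite length, and using the depth formula on the preceding syzygy step to show $\Tor_1$ embeds in a module of positive depth, hence vanishes. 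This is a correct and standard depth/Acyclicity-style argument, and it makes the proof self-contained where the paper leans on the literature. You also explicitly note that the same induction shows the tensor product is MCM over $R$; this point is implicitly needed for the paper's reduction to $r=2$ (so that $M_2\otimes_Q\cdots\otimes_Q M_r$ qualifies as an MCM module over the complementary intermediate complete intersection), and it is good that you flagged it. In short: same mechanism, but you do the general $r$ in one pass and reprove the input lemma rather than cite it.
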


\begin{proof} By induction it suffices to prove the theorem when $r=2$.  By symmetry it suffices to prove that $\varphi_{J_1}(\V_{R_1}(M_1))=\V_R(M_1 \otimes_QM_2)\cap W_1$, and by Theorem \ref{thm:main}, for this it suffices to prove that $\V_{R_1}(M_1)=\V_{R_1}(M_1\otimes_Q M_2)$.
Choose $\overline f$ in $W_1$.  Then $f$ is regular on $R_2$. Since $M_2$ is a maximal Cohen-Macaulay $R_2$-module, we have that $f$ is also regular on $M_2$.  Thus a (necessarily) finite free resolution 
$F$ of $M_2$ over $Q$ yields the finite free resolution 
$Q/(f)\otimes_QF$ of $Q/(f)\otimes_QM_2$ over $Q/(f)$.  
Since $f$ annihilates $M_1$, we have that $M_1$ is a 
$Q/(f)$-module, and we have the standard isomorphism 
$\Tor^{Q/(f)}_i(M_1,Q/(f)\otimes_Q M_2)\cong \Tor^Q_i(M_1,M_2)$ for all $i>0$.  Now by 
\cite[2.2]{Jorgensen2} we have $\Tor^Q_i(M_1,M_2)=0$ for all $i>0$.  Thus a free resolution of 
$M_1\otimes_Q M_2 \cong M_1\otimes_Q (Q/(f)\otimes_Q M_2)$ over $Q/(f)$ is gotten by tensorring
a free resolution of $M_1$ over $Q/(f)$ with a free resolution of $Q/(f)\otimes_Q M_2$ over $Q/(f)$.  Since the latter resolution is finite, $M_1\otimes_Q M_2$ will have a finite free resolution over $Q/(f)$ if and only if $M_1$ has one, and this is what we needed to show. 
\end{proof}

One may ask how support varieties change upon moving to higher codimension.  As a corollary of Theorem 
\ref{thm:tensor}, we give one basic case where it does not change.

\begin{corollary}Suppose that $W$ and $W'$ are subspaces of $V$ with $V=W\oplus W'$. Let $R'=Q/J$ 
and $R''=Q/J'$ be intermediate complete intersections corresponding to $W$ and $W'$, so that 
$\varphi_J(J/\n J)=W$ and $\varphi_{J'}(J'/\n J')=W'$. Let $M$ be a maximal Cohen-Macaulay 
$R'$-module.  Then $\V_R(M/J'M) \subseteq W$, and we have
\[
\varphi_{J}(\V_{R'}(M))=\V_R(M/J'M)
\]
\end{corollary}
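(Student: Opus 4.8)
The plan is to obtain both assertions from Theorem~\ref{thm:tensor}, the key point being that $M/J'M$ is insensitive to the choice of the complement $W'$. Since $\varphi_J$ and $\varphi_{J'}$ are onto $W$ and $W'$ and $W\oplus W'=V$, one has $J+J'+\n I=I$, hence $J+J'=I$ by Nakayama; as $M$ is an $R'=Q/J$-module this gives $J'M=(J+J')M=IM$, so $M/J'M=M/IM$. The same computation shows that for \emph{any} ideal $J''\subseteq I$ generated by a regular sequence with $\varphi_{J''}(J''/\n J'')$ a complement of $W$ in $V$, we have $M/J''M=M/IM=M/J'M$.

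Granting the inclusion $\V_R(M/J'M)\subseteq W$ for a moment, the displayed equality is immediate: applying Theorem~\ref{thm:tensor} with $r=2$, $W_1=W$, $W_2=W'$, $R_1=R'$, $R_2=R''$, $M_1=M$ and $M_2=R''$ (maximal Cohen--Macaulay over itself, being a complete intersection), and using $M\otimes_Q R''=M/J'M$, the case $i=1$ reads $\varphi_J(\V_{R'}(M))=\V_R(M/J'M)\cap W$, which by the inclusion equals $\V_R(M/J'M)$.

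To prove $\V_R(M/J'M)\subseteq W$, I would fix $\overline f\in V\setminus W$ and show $\overline f\notin\V_R(M/J'M)$. Since $\overline f\neq 0$ and $W\cap\langle\overline f\rangle=0$, the line $\langle\overline f\rangle$ extends to a complement $W''$ of $W$ in $V$. Choosing a basis of $W''$ with first member $\overline f$ and lifting it to elements of $I$, taking $f$ itself as the lift of $\overline f$, we obtain by \cite[Theorem~2.1.2]{BrunsHerzog} a regular sequence generating an ideal $J''\subseteq I$ with $\varphi_{J''}(J''/\n J'')=W''$; by the first paragraph $M/J''M=M/J'M$. Now apply Theorem~\ref{thm:tensor} to the decomposition $V=W\oplus W''$, with intermediate complete intersections $R'$ (for $W$) and $Q/J''$ (for $W''$) and modules $M$ and $Q/J''$: the equation belonging to the subspace $W''$ becomes
\[
\V_R(M/J'M)\cap W''=\V_R\bigl(M\otimes_Q (Q/J'')\bigr)\cap W''=\varphi_{J''}\bigl(\V_{Q/J''}(Q/J'')\bigr)=\{0\},
\]
the last equality holding by Theorem~\ref{thm:props}(2) because $Q/J''$ has finite projective dimension over itself. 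As $\overline f\in W''\setminus\{0\}$, this yields $\overline f\notin\V_R(M/J'M)$, as desired.

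The main obstacle is that a naive application of Theorem~\ref{thm:tensor} only gives the intersections of $\V_R(M/J'M)$ with $W$ and with $W'$, and those do not pin down $\V_R(M/J'M)$ inside $V$ (a ``diagonal'' cone could meet each of $W,W'$ only in $\{0\}$). The resolution is the observation that $M/J'M=M/IM$ depends only on $M$ and not on $J'$, so one is free to replace $J'$ by a complementary intermediate complete intersection adapted to any prescribed $\overline f\notin W$. The remaining ingredients — extending a line to a complementary subspace, and the fact from \cite{BrunsHerzog} that preimages of a basis of a subspace of $I/\n I$ form a regular sequence — are routine.
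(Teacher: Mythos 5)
Your proof is correct, and for the displayed equality it takes essentially the paper's route: apply Theorem \ref{thm:tensor} with $M_1=M$, $M_2=R''$ and use $M\otimes_QR''\cong M/J'M$ (the paper instead chains Theorem \ref{thm:props}(\ref{regseq}) with Theorem \ref{thm:main} via $\V_{R'}(M)=\V_{R'}(M/J'M)$, but the content is the same), then finish with the inclusion $\V_R(M/J'M)\subseteq W$. Where you genuinely diverge is in proving that inclusion. The paper gets it in one step: from $\varphi_{J'}(\V_{R''}(R''))=\V_R(M\otimes_QR'')\cap W'=\{0\}$ it concludes $\V_R(M\otimes_QR'')\subseteq W$. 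As you point out, trivial intersection of a cone with $W'$ does not by itself force containment in the complementary subspace $W$ (a ``diagonal'' cone is the obvious worry), so as literally written the paper's step is a leap. Your repair --- observing that $J+J'=I$ by Nakayama, hence $M/J'M=M/IM$ is independent of the chosen complement, and then rerunning Theorem \ref{thm:tensor} against a complement $W''$ of $W$ passing through any prescribed $\overline f\notin W$, with second factor $Q/J''$ --- converts the single intersection statement into pointwise containment. This costs a routine construction (extending $\langle\overline f\rangle$ to a complement and lifting a basis to a regular sequence generating $J''$, as in Section \ref{sec:intermediate} and \cite[Theorem 2.1.2]{BrunsHerzog}), but it buys a complete argument for $\V_R(M/J'M)\subseteq W$, whereas the paper's shorter version leaves exactly this point unjustified; so your write-up is not only correct but supplies the missing justification.
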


\begin{proof}We first establish the inclusion.  Since $\V_{R''}(R'')=0$, by Theorem \ref{thm:tensor} we have $0=\varphi_{J'}(\V_{R''}(R''))=\V_R(M\otimes_QR'')\cap W'$.  Thus $\V_R(M\otimes_QR'')\subseteq W$, and since $M\otimes_QR''\cong M/J'M$, we have the desired inclusion.  Noting that $J'$ is generated by an $M$-regular sequence, and by using (\ref{regseq}) of Theorem \ref{thm:props} together with Theorem \ref{thm:tensor}, we now have
$\varphi_{J}(\V_{R'}(M))=\varphi_{J}(\V_{R'}(M/J'M))=\V_R(M/J'M)\cap W=\V_R(M/J'M)$. 
\end{proof}

The following theorem gives a useful criterion for determining when syzygies of modules over complete intersections split (see, for example \cite{O'CarrollPopescu}).  It uses a result of the first author, \cite[Theorem 3.1]{Bergh}, which says that if $M$ is a maximal Cohen-Macaulay $R$-module, then $\V_R(M)$ is irreducible if $M$ is indecomposable.

\begin{theorem}
Let $M$ be a maximal Cohen-Macaulay $R$-module, and $W$ a subspace of $V$.  Let $R'$ be an intermediate complete intersection corresponding to $W$. Suppose that 
$\V_R(M)$ is irreducible and  $\V_R(M)\cap W$ is reducible.  Then $M$ is an indecomposable 
$R$-module, but the $R'$-syzygies of $M$ split.
\end{theorem}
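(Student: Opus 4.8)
The plan is to transport both conclusions across the surjection $R' \twoheadrightarrow R$, using Theorem \ref{thm:main} to identify support varieties computed over $R'$ with support varieties over $R$ cut down by $W$, together with the theorem of Bergh quoted above, which in contrapositive form says that a maximal Cohen--Macaulay module over a complete intersection whose support variety is reducible must decompose. The substantive part is the splitting of the $R'$-syzygies; the indecomposability of $M$ over $R$ I would read off from Bergh's theorem, since $\V_R(M)$ is assumed irreducible. (Concretely, were $M \cong M_1 \oplus M_2$ with both summands nonzero, then each $M_i$ is again maximal Cohen--Macaulay over $R$, and applying Theorem \ref{thm:props}(6) to the split exact sequences $0 \to M_1 \to M \to M_2 \to 0$ in both orders gives $\V_R(M) = \V_R(M_1) \cup \V_R(M_2)$ with both pieces Zariski closed, so irreducibility of $\V_R(M)$ already pins one of them to be all of $\V_R(M)$; the conclusion that $M$ itself is indecomposable is exactly \cite[Theorem 3.1]{Bergh} applied over $R$.)

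For the $R'$-syzygies, the point that needs care is that $M$, regarded as an $R'$-module via $R' \twoheadrightarrow R$, is in general \emph{not} maximal Cohen--Macaulay over $R'$: its depth over $R'$ equals $\dim R$, which is strictly less than $\dim R'$ unless $W = V$. So Bergh's theorem cannot be applied to $M$ over $R'$ directly. Instead I would pass to a high syzygy: over the Cohen--Macaulay local ring $R'$ the usual depth chase along $0 \to \Omega^i_{R'}(M) \to F_{i-1} \to \Omega^{i-1}_{R'}(M) \to 0$ shows that $N := \Omega^n_{R'}(M)$ is maximal Cohen--Macaulay over $R'$ as soon as $n \ge \dim R'$, and likewise for every larger $n$.

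Now I would combine the inputs. By Theorem \ref{thm:props}(5) (applied over $R'$) one has $\V_{R'}(N) = \V_{R'}(M)$; by Theorem \ref{thm:main}, $\varphi_J(\V_{R'}(M)) = \V_R(M) \cap W$, which is reducible by hypothesis, and since $\varphi_J$ is a linear isomorphism onto $W$ it carries a reducible cone to a reducible cone. Hence $\V_{R'}(N)$ is reducible, and applying Bergh's theorem over the complete intersection $R'$ to the maximal Cohen--Macaulay $R'$-module $N$ forces $N$ to decompose. Thus $\Omega^n_{R'}(M)$ splits for every $n \ge \dim R'$, which is the assertion.

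I expect the main obstacle to be exactly the mismatch flagged above: $M$ is maximal Cohen--Macaulay over $R$ but not over $R'$, so Bergh's theorem over $R'$ is not available for $M$ itself, and one must move to a syzygy that \emph{is} maximal Cohen--Macaulay over $R'$, verify via Theorem \ref{thm:props}(5) that this leaves the support variety unchanged, and transport the reducibility of $\V_R(M) \cap W$ to it through $\varphi_J$ and Theorem \ref{thm:main}. Once this bookkeeping between $R$-varieties, $R'$-varieties and the subspace $W$ is in place, both conclusions follow in a line.
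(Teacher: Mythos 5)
Your handling of the splitting statement follows the paper's argument exactly: transport reducibility of $\V_R(M)\cap W$ to $\V_{R'}(M)$ via Theorem \ref{thm:main}, observe via Theorem \ref{thm:props}(5) that the variety is unchanged along $R'$-syzygies, pass to a syzygy that is maximal Cohen--Macaulay over $R'$, and invoke Bergh's theorem over $R'$. You correctly flag the one subtlety, namely that $M$ itself need not be MCM over $R'$, which is precisely why the paper passes to a syzygy before applying Bergh.

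Where your write-up has a genuine gap is the parenthetical argument for indecomposability of $M$ over $R$. Bergh's theorem, as cited in the paper, says that an indecomposable MCM module has irreducible variety; what you need is the converse implication, and that converse is false in general (for instance $M\oplus M$ has irreducible variety whenever $M$ does, yet is decomposable). Your appeal to Theorem \ref{thm:props}(6) only yields that one of $\V_R(M_1)$, $\V_R(M_2)$ equals $\V_R(M)$, which is not a contradiction, so the parenthetical reasoning does not close. To be fair, the paper's own proof is entirely silent on the indecomposability claim, so this is an issue with the theorem's statement as much as with your attempt; the substantive assertion, the splitting of the $R'$-syzygies, you have argued correctly and by the same route as the paper.
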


\begin{proof}
By Theorem \ref{thm:main} we have that $\V_{R'}(M)=\V_R(M)\cap W$ decomposes as a union of non-zero closed sets with zero intersection.  Thus the same is true for any syzygy of $M$ over $R'$.  Choose a syzygy $M'$ of $M$ over $R'$ such that $M'$ is a maximal Cohen-Macaulay $R'$-module.  Then by \cite[Theorem 3.1]{Bergh},  the module $M'$ splits as a direct sum of nonzero submodules.
\end{proof}

\begin{example}
Consider $Q=k[[x,y,z]]$, and $R=Q/(x^2,y^2,z^2)$.  Then $R$ is a codimension 3 complete intersection.  We know there exists an $R$-module $M$ such that $\V_R(M)=\Z(x_1x_2-x_3^2)$.
We actually construct said module.  One can show that $k$ has a minimal free resolution $F$ over $R$ of the 
form 
\[
\cdots \to R^{21} \to R^{15} \xrightarrow{\partial_4} R^{10} \to R^6 \to R^3 \xrightarrow{\partial_1} R \to k \to 0
\]  
where $\xi_k(\chi_1\chi_2-\chi_3^2)\in\Ext_R^4(k,k)$ corresponds to the map $R^{15}\to R$ that sends $e_1$ to $-1$, $e_{13}$ to 1, and $e_i$ to 0 for $i\ne 1,13$, where the $e_i$ denote a standard basis of $R^{15}$.  With respect to the standard bases, the differentials $\partial_1$ and 
$\partial_4$ of $F$ are represented by the matrices
\[
\left(\begin{array}{l l l} x & y & z \end{array}\right) 
\]
and
\[
\left(\begin{array}{c c c c c c c c c c c c c c c} z&y&x&0&0&0&0&0&0&0&0&0&0&0&0 \\
0&z&0&-y&-x&0&0&0&0&0&0&0&0&0&0 \\
0&0&z&0&y&-x&0&0&0&0&0&0&0&0&0 \\
0&0&0&z&0&0&-y&x&0&0&0&0&0&0&0 \\
0&0&0&0&0&0&z&0&0&0&y&x&0&0&0 \\
0&0&0&0&z&0&0&-y&-x&0&0&0&0&0&0 \\
0&0&0&0&0&0&0&z&0&0&0&y&-x&0&0 \\
0&0&0&0&0&z&0&0&-y&-x&0&0&0&0&0 \\
0&0&0&0&0&0&0&0&z&0&0&0&y&-x&0 \\
0&0&0&0&0&0&0&0&0&z&0&0&0&y&x \end{array}\right)
\]
respectively. Thus, according to Theorem \ref{thm:realize}, the module $M$ having support variety $\Z(x_1x_2-x_3^2)$ is the cokernel of the map represented by the following $11\times 18$ matrix
\[ \arraycolsep=4.5pt\def\arraystretch{1}
\left( \begin{array}{c c c c c c c c c c c c c c c c c c} 
z&y&x&0&0&0&0&0&0&0&0&0&0&0&0&0&0&0 \\
0&z&0&-y&-x&0&0&0&0&0&0&0&0&0&0&0&0&0 \\
0&0&z&0&y&-x&0&0&0&0&0&0&0&0&0&0&0&0 \\
0&0&0&z&0&0&-y&x&0&0&0&0&0&0&0&0&0&0 \\
0&0&0&0&0&0&z&0&0&0&y&x&0&0&0&0&0&0 \\
0&0&0&0&z&0&0&-y&-x&0&0&0&0&0&0&0&0&0 \\
0&0&0&0&0&0&0&z&0&0&0&y&-x&0&0&0&0&0 \\
0&0&0&0&0&z&0&0&-y&-x&0&0&0&0&0&0&0&0 \\
0&0&0&0&0&0&0&0&z&0&0&0&y&-x&0&0&0&0 \\
0&0&0&0&0&0&0&0&0&z&0&0&0&y&x&0&0&0 \\
-1&0&0&0&0&0&0&0&0&0&0&0&1&0&0&x&y&z \end{array} \right)
\]
\end{example}

\section{Appendix}\label{sec:appendix}

We establish core facts that were used in the main body of the paper.

\subsection{Chain maps and short exact sequences}\label{appendix:ses}
Let $A$ be a ring and $M$ an $A$-module.  Suppose that $F$ is a projective resolution 
of $M$ over $A$.  Choose $[p]\in \Ext_A^d(M,M)$.  Then $[p]$ is a homotopy equivalence class of a chain map
$p: \Sigma^{-d}F \to F$.  Consider the augmented mapping cone given by:
\[
\augcone(p):  \cdots \to F_{d+1}\oplus F_2 \xrightarrow{\left(\begin{array}{cc} -\partial_{d+1} & 0\\
p & \partial_2 \end{array}\right)} F_d\oplus F_1 
\xrightarrow{\left(\begin{array}{cc} -\partial_{d} & 0\\
p & \partial_1 \end{array}\right)} F_{d-1}\oplus F_0
\]
We have the obvious short exact sequence of complexes 
\begin{equation}\label{ses}
0\to F \to \augcone(p) \to \Sigma^{-d+1}F \to 0
\end{equation}
which gives rise to a long exact sequence of homology, which, by taking into account exactness, is
\[
0 \to M \to K_p \to \Omega^{d-1}_A(M) \to 0
\]
where $K_p$ is the cokernel of the map $\left(\begin{array}{cc} -\partial_{d} & 0\\
p & \partial_1 \end{array}\right)$.

Now applying $\Hom_A(-,N)$ to (\ref{ses}) we obtain the long exact sequence 
\[
\cdots \to \Ext_A^{i-d}(M,N) \xrightarrow{\xi_N(p)\cdot} \Ext_A^i(M,N) \to \Ext_A^i(K_p,N) \to \Ext_A^{i-d+1}(M,N) \to \cdots
\]
where $\xi_N(p)\cdot$ is left multiplication by the element $\xi_N(p)\in\Ext_A^d(N,N)$.

\subsection{Syzygies}\label{appendix:syzygies} In this section we show that in some cases Ext is independent of which ring syzygies of a module are taken over.

\begin{proposition}
Suppose that $A$ is a local ring, $I$ a proper ideal of $A$ of finite projective dimension and $B=A/I$.
Let $M$ and $N$ be $B$-modules.  Then 
\[
\Ext_A^i(\Omega^n_B(M),N)\cong\Ext_A^i(\Omega^n_A(M),N)
\]
for all $n\ge 0$ and all $i>\pd_AB$.
\end{proposition}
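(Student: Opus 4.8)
The plan is to compare two resolutions of $M$: a minimal (or any) $A$-free resolution $P_\bullet \to M$ and a $B$-free resolution $G_\bullet \to M$. Applying $B \otimes_A -$ to an $A$-free resolution of $B$ and splicing, one gets that a $B$-free resolution of $M$ can be built so that, in high homological degrees, $\Omega^n_B(M)$ and $\Omega^n_A(M)$ are related by a bounded complex. More precisely, I would run the standard argument: choose an $A$-free resolution $F_\bullet$ of $B$ with $F_j = 0$ for $j > p := \pd_A B$, and take a $B$-free resolution $G_\bullet$ of $M$; then the complex $F_\bullet \otimes_A G_\bullet$ (total complex) is an $A$-free resolution of $M$ (since $G_j$ is $B$-free, hence $F_\bullet \otimes_A G_j$ resolves $G_j$, and one assembles these), and it agrees with an honest $A$-free resolution in all degrees. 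The point is that $\Omega^n_A(M)$, computed as the $n$-th image of this total complex, and $\Omega^n_B(M)$, the $n$-th image of $G_\bullet$, sit in a short exact sequence (or finite filtration) whose other subquotients are $A$-modules of finite projective dimension $\le p$, built out of the $F_j \otimes_A G_{n-j}$ for $j \ge 1$.

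Concretely, I would argue as follows. For each $n$, there is a filtration of $\Omega^n_{A}(M)$ (via the total complex of $F_\bullet \otimes_A G_\bullet$) whose associated graded pieces are $\Omega^{n-j}_{A}(B) \otimes_A F' $-type modules for $j = 0, 1, \dots$, with the $j = 0$ piece being exactly $\Omega^n_B(M)$ and every other piece having finite $A$-projective dimension, in fact being a syzygy of $B$ over $A$ and so of projective dimension $< p$ (these vanish after finitely many steps once $n > p$). Then applying $\Hom_A(-, N)$ and using the long exact sequence of $\Ext$, the terms $\Ext^i_A(\text{piece of finite pd})$ vanish for $i > p$, and an induction on the length of the filtration collapses $\Ext^i_A(\Omega^n_A(M), N)$ onto $\Ext^i_A(\Omega^n_B(M), N)$ for all $i > p$. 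Alternatively — and this is probably cleaner to write — I would use that $\Omega^n_A(M)$ and $\Omega^n_B(M)$ become \emph{stably isomorphic over $A$ up to modules of finite projective dimension}: there is a short exact sequence $0 \to \Omega^n_B(M) \to \Omega^n_A(M) \oplus (\text{free}) \to Z \to 0$ with $\pd_A Z \le p$, which immediately gives the $\Ext$ comparison in degrees $> p$ via the long exact sequence, since $\Ext^i_A(Z,N) = 0 = \Ext^{i+1}_A(Z,N)$ for $i > p$.

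The main obstacle is setting up the comparison short exact sequence (or filtration) correctly and identifying its cokernel as an $A$-module of projective dimension $\le \pd_A B$ — i.e., making precise the claim that "the difference between the $A$-syzygy and the $B$-syzygy is governed by syzygies of $B$ over $A$." This is a bookkeeping argument with the total complex $F_\bullet \otimes_A G_\bullet$; the subtlety is that one must check this double complex really is an $A$-free resolution of $M$ (which uses that each $G_i$ is $B$-free, so $F_\bullet \otimes_A G_i$ is a resolution of $G_i$, plus a spectral-sequence or direct diagram chase) and that the relevant subquotient modules indeed have the asserted finite projective dimension. Once that structural fact is in hand, the passage to $\Ext$ via long exact sequences and dimension shifting is routine, and the bound $i > \pd_A B$ falls out exactly as the degree beyond which $\Ext^i_A(-, N)$ kills every finite-projective-dimension module appearing.
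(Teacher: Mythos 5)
Your guiding idea---that the discrepancy between $A$-syzygies and $B$-syzygies is governed by $B$-free modules, which have finite projective dimension $\pd_A B$ over $A$---is the right one, but both concrete implementations you propose break down. First, the total complex of $F_\bullet \otimes_A G_\bullet$ is not an $A$-free resolution of $M$: since $G_j \cong B^{n_j}$, the terms $F_i \otimes_A G_j \cong (F_i/IF_i)^{n_j}$ are $B$-free, not $A$-free, and $F_\bullet \otimes_A G_j$ does not resolve $G_j$ because its homology is $\Tor^A_*(B,B)^{n_j}$, which is nonzero in positive degrees whenever $I \neq 0$ (e.g.\ $\Tor^A_1(B,B) \cong I/I^2$). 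Repairing this requires resolving the complex $G_\bullet$ by a Cartan--Eilenberg-type resolution built from copies of an $A$-free resolution of $A^{n_j}$ lifting the differentials of $G_\bullet$, and then the filtration claim about the syzygies of the total complex is exactly the bookkeeping you defer---so the crucial step is missing. Second, the ``cleaner'' comparison sequence $0 \to \Omega^n_B(M) \to \Omega^n_A(M) \oplus (\text{free}) \to Z \to 0$ with $\pd_A Z \le p$ is false in general: take $A = k[[x]]$, $I = (x^2)$, $M = k$, $n = 1$; then $\Omega^1_A(k) \cong A$ is free, so the middle term is torsion-free, while $\Omega^1_B(k) \cong k$ admits no embedding into it.

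All of this machinery is unnecessary. The paper proves the statement by a short induction on $n$ using only the short exact sequences coming from the $B$-resolution itself: from $0 \to \Omega^n_B(M) \to F \to \Omega^{n-1}_B(M) \to 0$ with $F$ a free $B$-module, the vanishing $\Ext^i_A(F,N) = 0$ for $i > \pd_A B$ gives $\Ext^i_A(\Omega^n_B(M),N) \cong \Ext^{i+1}_A(\Omega^{n-1}_B(M),N)$ in that range, and combining this with the induction hypothesis and ordinary dimension shifting over $A$ (which gives $\Ext^{i+1}_A(\Omega^{n-1}_A(M),N) \cong \Ext^i_A(\Omega^n_A(M),N)$) finishes the proof. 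This is precisely your ``discrepancy has finite projective dimension'' insight, applied one syzygy at a time to the $B$-resolution, with no double complex, no filtration, and no comparison sequence to construct.
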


\begin{proof} We induct on $n$.  The case $n=0$ is trivial.  Suppose the result is true for $n-1\ge 0$.  The short exact sequence $0\to \Omega^n_B(M) \to F \to \Omega^{n-1}_B(M) \to 0$, where $F$ is a free $B$-module, gives rise to the long exact sequence
\begin{align*}
\cdots\to\Ext_A^i(F,N)\to\Ext_A^i(\Omega^n_B(M),N)\to\Ext_A^{i+1}(\Omega^{n-1}_B&(M),N)\\
& \to\Ext_A^{i+1}(F,N) \to \cdots
\end{align*}
Since $\Ext_A^i(F,N)=0$ for all $i>\pd_AB$, we see that 
\[
\Ext_A^i(\Omega^n_B(M),N)\cong\Ext_A^{i+1}(\Omega^{n-1}_B(M),N)
\] 
for all $i>\pd_AB$.  This fact, induction and dimension shifting thus give 
\begin{align*}
\Ext_A^i(\Omega^n_B(M),N)&\cong\Ext_A^{i+1}(\Omega^{n-1}_B(M),N)\\
                         &\cong\Ext_A^{i+1}(\Omega^{n-1}_A(M),N)\\
                         &\cong\Ext_A^i(\Omega^n_A(M),N)
\end{align*}
for all $i>\pd_AB$.
\end{proof}

\subsection{Regular Sequences}\label{apprendix:regseq}

We recall some basic facts regarding regular sequences in Cohen-Macaulay local rings.  Let $(A,\n,k)$ be a Cohen-Macaulay local ring, $f_1,\dots,f_c$ an $A$-regular sequence contained in $\n$, and 
$I=(f_1,\dots,f_c)$ the ideal they generate. 
\begin{enumerate}
\item If $g_1,\dots,g_r$ is a part of a minimal generating set for $I$, then $g_1,\dots,g_r$ form an $A$-regular sequence \cite[Theorem 2.1.2(c)]{BrunsHerzog}.
\item A consequence of Nakayama's Lemma is the following: $g_1,\dots,g_r$ are part of a minimal generating set
for $I$ if and only if $g_1+\n I,\dots,g_r+\n I$ are linearly independent in $I/\n I$.
\end{enumerate}


\subsection{Cohomology operators}\label{cohomology operators}

Let $R$ be a complete intersection of codimension $c$, that is,
\[
R = Q/ (f_1, \dots, f_c)
\]
where $(Q, \n, k)$ is a regular local ring, and $f_1, \dots, f_c$ is a regular sequence contained in $\n^2$. If $M$ is any $R$-module, then the above presentation of $R$ induces, for each $i \in \{ 1, \dots, c \}$, a degree $-2$ endomorphism of any free resolution of $M$, see \cite{Eisenbud} for details. As shown in \cite[Section 1]{Avramov} and \cite[Section 1]{AvramovBuchweitz}, these endomorphisms are images of generators of a certain polynomial ring over $R$. Moreover, this polynomial ring is ``cohomologically central", and all cohomology modules 
$$\Ext_R^*(M,N) \stackrel{\text{def}}{=} \oplus_{n=0}^{\infty} \Ext_R^n(M,N)$$
over $R$ are finitely generated over it. To be precise, there is a polynomial ring $R[ \bm{\chi} ] = R[ \chi_1, \dots, \chi_c]$, in $c$ commuting \emph{cohomology operators} $\chi_1, \dots, \chi_c$, satisfying the following:
\begin{enumerate}
\item The degree of each $\chi_i$ is $2$.
\item For every $R$-module $M$, there is a homomorphism
$$R[ \bm{\chi}] \xrightarrow{\varphi_M} \Ext_R^*(M,M)$$
of graded $R$-algebras.
\item For any pair $(M,N)$ of $R$-modules, the $R[ \bm{\chi} ]$-module structures on the cohomology $\Ext_R^*(M,N)$ via $\varphi_N$ (left module structure) and $\varphi_M$ (right module structure) coincide. That is, if $\eta \in R[ \bm{\chi} ]$ and $\theta \in \Ext_R^*(M,N)$ are homogeneous elements, then 
$\varphi_N(\eta) \cdot \theta = \theta \cdot \varphi_M(\eta)$.
\item For any pair $(M,N)$ of finitely generated $R$-modules, the $R[ \bm{\chi} ]$-module $\Ext_R^*(M,N)$ is finitely generated.
\end{enumerate}
Motivated by the classical theory of cohomological support varieties over group algebras, Avramov and Buchweitz introduced support varieties for complete intersections in \cite{AvramovBuchweitz}. Namely, the support variety of a pair of modules is the variety defined by the annihilator of the cohomology in $k [ \bm{\chi} ]$. By the last property above, for all finitely generated $R$-modules $M$ and $N$ the graded $k$-vector space $k \otimes_R \Ext_R^*(M,N)$ is a finitely generated module over $k \otimes_R R[ \bm{\chi} ]$. The latter is naturally identified with the polynomial ring $k [ \bm{\chi} ]$.

\begin{definition}[Avramov-Buchweitz]
Let $(Q, \n, k)$ be a regular local ring and $f_1, \dots, f_c$ a regular sequence contained in $\n^2$. Denote by $R$ the complete intersection ring $R = Q/ (f_1, \dots, f_c)$, and for $R$-modules $M$ and $N$, denote the $k [ \bm{\chi} ]$-module $k \otimes_R \Ext_R^*(M,N)$ by $E(M,N)$.

(1) For a pair $(M,N)$ of finitely generated $R$-modules, the support variety $\V_R(M,N)$ is defined as
$$\V_R(M,N) \stackrel{\text{def}}{=} \{ \bm{\alpha} \in \overline{k}^c \mid \phi ( \bm{\alpha} ) =0 \text{ for all } \phi \in \Ann_{k [ \bm{\chi} ]} E(M,N) \},$$
where $\overline{k}$ is the algebraic closure of $k$.

(2) For a single finitely generated $R$-module $M$, the support variety $\V_R(M)$ is defined as
$$\V_R(M) \stackrel{\text{def}}{=} \V_R(M,M).$$
\end{definition}

We mention some properties of these cohomological support varieties. Firstly, since $\Ext_R^*(M,N)$ is a graded module over the ring $R [ \bm{\chi} ]$ of cohomology operators, the $k [ \bm{\chi} ]$-module $E(M,N)$ is also graded. Its annihilator $\Ann_{k [ \bm{\chi} ]} E(M,N)$ is therefore a homogeneous ideal, hence the support variety $\V_R(M,N)$ contains the origin and is a \emph{cone} in affine $c$-space $\mathbb{A}^c ( \overline{k} )$. Secondly, by \cite[Theorem 5.6(9)]{AvramovBuchweitz}, for every finitely generated $R$-module $M$ there are equalities
$$\V_R(M,M) = \V_R(M,k) = \V_R(M,k).$$
Consequently, the support variety $\V_R(M)$ can be defined as any of these three varieties. Finally, the main reason why this theory is so powerful, as illustrated by the numerous results in \cite{AvramovBuchweitz}, is the fact that cohomological finiteness holds. In other words, the fact that for all finitely generated $R$-modules $M$ and $N$, the $R[ \bm{\chi} ]$-module $\Ext_R^*(M,N)$ is finitely generated. This is precisely the reason why the support varieties encode many of the homological properties of the modules involved. For example, the dimension of the variety of a module equals the complexity of the module. In particular, a module has trivial variety if and only if it has finite projective dimension (or, equivalently, finite injective dimension, since complete intersections are Gorenstein).

\subsection{Algebraic closure}\label{appendix:algebraically closed} Let $R$ be a complete intersection, that is, $R=Q/I$ for $(Q,\n,k)$ a regular local ring, and $I$ an ideal generated by a $Q$-regular sequence contained in $\n^2$.  Let $M$ and $N$ be finitely generated $R$-modules.  If $k$ is not algebraically closed, then there exists a faithfully flat extension $Q \to Q'$ such that the residue field $k'$ of $Q'$ is algebraically closed (see \cite[App., Th\'{e}or\`{e}m 1, Corollarie]{Bourbaki}).  In this case $R'=Q'/IQ'$ is a complete intersection with algebraically closed residue field, and for any ideal $J$ of $Q$, the induced map
$Q/J\to Q'/JQ'$ is a faithfully flat.  We define the support variety $\V_R(M,N)$ of the pair $(M,N)$ over $R$ as the support variety of the pair $(M\otimes_R R',N\otimes_RR')$ over $R'$.  Since $\Ext^n_{Q/J}(M,N)$ vanishes if and only if
so does $\Ext^n_{Q'/JQ'}(M\otimes_RR',N\otimes_RR')$ for every $n$, and for any ideal $J$ of $Q$, the results of the paper go through without assuming that $k$ is algebraically closed.

\subsection{Hypersurfaces}\label{appendix:hypersurface}  In this subsection we prove the following.

\begin{proposition} Suppose that $R$ is a hypersurface ring, that is, a quotient of a regular local ring 
$(Q,\n,k)$ by a principal ideal generated by a nonzero element $f\in \n$.  Let $M$ be a finitely generated $R$-module.  If $\Ext_R^i(M,k)\ne 0$ for infinitely many $i$, then right multiplication on 
$\Ext_R(M,k)$ by $\xi_k(\chi)$ is an isomorphism in all sufficiently high degrees.
\end{proposition}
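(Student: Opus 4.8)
The plan is to reduce the statement to the well-known structure of minimal free resolutions over a hypersurface, namely that they are eventually given by a $2$-periodic (matrix factorization) complex. First I would recall that since $R = Q/(f)$ is a hypersurface and $M$ is finitely generated, a sufficiently high syzygy $M' = \Omega_R^n(M)$ is a maximal Cohen--Macaulay $R$-module with no free direct summand, and hence $M'$ is the cokernel of a matrix factorization $(\alpha,\beta)$ of $f$; its minimal free resolution is genuinely $2$-periodic, of the form $\cdots \xrightarrow{\alpha} F \xrightarrow{\beta} F \xrightarrow{\alpha} F \to M' \to 0$. The cohomology operator $\chi$ acts on $\Ext_R^*(M',k)$, in high degrees, as the periodicity isomorphism shifting degree by $2$: this is Eisenbud's original description of $\chi$ in \cite{Eisenbud} as the chain endomorphism of the resolution induced by the periodicity. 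Thus right multiplication by $\xi_k(\chi)$ on $\Ext_R^i(M',k) = \Hom_R(F_i, k)$ is, for $i$ large, literally an identification of the two copies of $\Hom_R(F,k)$, hence an isomorphism.

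The second step is to transfer this from the high syzygy $M'$ back to $M$ itself. Since $\Ext_R^{i}(M',k) \cong \Ext_R^{i+n}(M,k)$ naturally for $i > 0$ (dimension shifting), and this isomorphism is compatible with the $k[\chi]$-action via $\varphi_M$, the fact that $\chi$ acts invertibly on $\Ext_R^*(M',k)$ in all degrees $\ge 1$ immediately gives that $\xi_k(\chi)\cdot$ is an isomorphism on $\Ext_R^i(M,k)$ for all $i \gg 0$. The hypothesis that $\Ext_R^i(M,k) \ne 0$ for infinitely many $i$ guarantees that $M$ does not have finite projective dimension over $R$, so that $M'$ indeed has no free summand and the periodic picture genuinely applies (otherwise $\xi_k(\chi)$ could be an isomorphism only vacuously, or the relevant $\Ext$ groups would vanish, which in fact is fine too but the non-vanishing hypothesis makes the statement substantive).

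The step I expect to require the most care is the identification of the cohomology operator $\chi$ with the periodicity map on the eventually $2$-periodic resolution. One must invoke the construction of $\chi$ from \cite{Eisenbud}: lifting a free resolution $F$ of $M$ over $R$ to a sequence of maps over $Q$, the relation $\partial^2 = f \cdot \tilde{t}$ for a degree $-2$ map $\tilde t$ produces, after reducing mod $f$, the chain endomorphism $t \colon \Sigma^{-2} F \to F$ whose homotopy class is $\chi$. For a matrix factorization resolution one checks directly that this $t$ is (homotopic to) the identity in high degrees, since the differentials $\alpha, \beta$ satisfy $\alpha\beta = \beta\alpha = f\cdot \mathrm{id}$, so $\tilde t$ can be taken to be the identity and $\chi$ acts as the shift. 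Dualizing into $k$ and passing to cohomology then yields the claim. Everything else — dimension shifting, Nakayama to see minimality, the reduction to a high syzygy — is routine.
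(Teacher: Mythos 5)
Your argument is correct, but it runs on the opposite side of the Ext pairing from the paper's. The paper never touches the resolution of $M$: it observes that the hypothesis forces $R$ to be a singular hypersurface, invokes Eisenbud's eventual two-periodicity for the minimal free resolution of the residue field $k$, identifies $\varphi_k(\chi)$ with the homotopy class of the periodicity endomorphism of \emph{that} resolution, and concludes directly that multiplication by $\xi_k(\chi)$ on $\Ext_R^*(M,k)$ is eventually bijective, uniformly in $M$. You instead use Eisenbud's periodicity on the $M$-side: pass to a high syzygy $M'$, realize the tail of its minimal resolution by a matrix factorization $(\alpha,\beta)$ with $\alpha\beta=\beta\alpha=f\cdot\mathrm{id}$, check that the Eisenbud operator can be taken to be the identity there, and transfer back to $M$ by dimension shifting; that this right action via $\varphi_{M}$ agrees (up to sign) with multiplication by $\xi_k(\chi)$ is the centrality property of the operators, as you note. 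What your route buys is an explicit verification of the eventual bijectivity -- the paper's final inference, that composition with the periodicity class of $k$'s resolution is an isomorphism on $\Ext_R^*(M,k)$ in high degrees, is asserted rather than spelled out -- at the cost of some bookkeeping you should make explicit: the claim that the high syzygy has no free direct summand follows because minimality forces the matrix factorization to be reduced (no unit entries), and the dimension-shifting isomorphism is $\chi$-linear because the Eisenbud endomorphism of the truncated resolution is the restriction of the one for the full resolution of $M$. Both proofs ultimately rest on the same two pillars: Eisenbud's eventual two-periodicity over a hypersurface and the identification of $\chi$ with a periodicity endomorphism.
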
  

\begin{proof}  The hypothesis that $\Ext_R^i(M,k)\ne 0$ for infinitely many $i$ implies that $M$ has infinite projective dimension over $R$.  In this case $R$ must be a singular hypersurface (i.e. $f\in\n^2$) and it is well-known that $k$ has an infinite minimal free resolution over $R$, which is eventually 2-periodic 
\cite{Eisenbud}.  Since $R$ is a singular hypersurface, the ring of cohomology operators is a polynomial ring in a single cohomology operator variable $\chi$.  The map
\[
\varphi_k:R[\chi]\to\Ext_R^*(k,k)
\]
sends $\chi$ to the homotopy equivalence class of the periodicity endomorphism of the minimal free resolution of $k$.  This shows that left multiplication by $\varphi_k(\chi)$ on $\Ext_R^*(M,k)$ is an isomorphism in all sufficiently high degrees.
\end{proof}

\end{document}